\DeclareMathOperator{\Char}{{char}}
\newcommand{\rar}[1]{\stackrel{#1}{\longrightarrow}}
\newcommand{\bA}{{\mathbb A}}
\newcommand{\bF}{{\mathbb F}}
\newcommand{\bP}{{\mathbb P}}
\newcommand{\bZ}{{\mathbb Z}}
\newcommand{\cA}{{\mathcal A}}
\newcommand{\cE}{{\mathcal E}}
\newcommand{\cO}{{\mathcal O}}
\newcommand{\cP}{{\mathcal P}}
\newcommand{\cQ}{{\mathcal Q}}
\newcommand{\cR}{{\mathcal R}}
\newcommand{\cW}{{\mathcal W}}
\newcommand{\cZ}{{\mathcal Z}}
\newcommand{\Zb}{\overline{Z}}
\newcommand{\td}{\tilde{d}}
\newcommand{\tv}{\tilde{v}}
\newcommand{\tw}{\widetilde{w}}
\newcommand{\tx}{\widetilde{x}}
\newcommand{\ty}{\widetilde{y}}
\newcommand{\tz}{\widetilde{z}}
\newcommand{\rx}{\widetilde{x^{[2]}}}
\newcommand{\ry}{\widetilde{y^{[2]}}}
\newcommand{\rz}{\widetilde{z^{[2]}}}
\newcommand{\nc}{\newcommand}
\nc\wh{\widehat}
\nc\on{\operatorname}
\nc\Gr{\on{Gr}}
\nc\Fl{\on{Fl}}
\newcommand{\limto}{{\displaystyle\lim_{\longrightarrow}}}
\newcommand{\rightlim}{\mathop{\limto}}
\newcommand{\leftlim}{\mathop{\displaystyle\lim_{\longleftarrow}}}
\newcommand{\limfromn}{\leftlim\limits_{\raise3pt\hbox{$n$}}}
\newcommand{\limton}{\rightlim\limits_{\raise3pt\hbox{$n$}}}
\newcommand{\rightlimit}[1]{\mathop{\lim\limits_{\longrightarrow}}\limits%
                    _{\raise3pt\hbox{$\scriptstyle #1$}}}
\newcommand{\leftlimit}[1]{\mathop{\lim\limits_{\longleftarrow}}\limits%
                    _{\raise3pt\hbox{$\scriptstyle #1$}}}
\newcommand{\epi}{\twoheadrightarrow}
\newcommand{\iso}{\buildrel{\sim}\over{\longrightarrow}}
\newcommand{\mono}{\hookrightarrow}
\DeclareMathOperator{\Ker}{{Ker}} 
\DeclareMathOperator{\im}{{Im}}
\DeclareMathOperator{\Spec}{{Spec}}
\DeclareMathOperator{\cone}{{cone}}
\newcommand{\Rmnum}[1]{\expandafter\@slowromancap\romannumeral #1@}
\newtheorem{Th}{Theorem}
\newtheorem{pr}{Proposition}[section]
\newtheorem{lm}[pr]{Lemma}
\newtheorem{cor}[pr]{Corollary}
\theoremstyle{definition}
\newtheorem{rem}[pr]{Remark}
\newtheorem{cl}[pr]{Claim}
\numberwithin{equation}{section}
\begin{document}

\title[Triangulated endofunctors of the derived category of coherent sheaves which do not admit DG liftings]
{Triangulated endofunctors of the derived category of coherent sheaves which do not admit DG liftings}

\author{Vadim Vologodsky}

\address
{National Research University ``Higher School of Economics'' and  the University of Oregon}
\email{vvologod@uoregon.edu}






\begin{abstract} 
In \cite{rv}, Rizzardo and Van den Bergh constructed an example of a triangulated functor between the derived categories of coherent sheaves on smooth projective varieties over a field $k$ of characteristic $0$ which is not of the  Fourier-Mukai type.  The purpose of this note is to show that if $\Char k =p$ then there are very simple examples of such functors. Namely, for a smooth projective $Y$ over $\bZ_p$ with the special fiber $i: X\mono Y$, we consider the functor $L i^*   \circ  i_*: D^b(X) \to  D^b(X)$ from the  derived categories of coherent sheaves on $X$ to itself. We show that if $Y$ is a flag variety which is not isomorphic to $\bP^1$ then $L i^*   \circ  i_*$ is not  of the  Fourier-Mukai type. Note that by a theorem of Toen (\cite{t}, Theorem 8.15) the latter assertion is equivalent to saying that $L i^*   \circ  i_*$  does not admit a lifting to a $\bF_p$-linear DG quasi-functor $D^b_{dg}(X) \to  D^b_{dg}(X)$, where $D^b_{dg}(X)$ is a (unique) DG enhancement of $D^b(X)$. However, essentially by definition, $L i^*   \circ  i_*$ lifts to a 
$\bZ_p$-linear DG quasi-functor.

 \end{abstract}

\maketitle

Given smooth proper schemes $X_1, X_2$ over a field $k$ and an object  $E\in D^b(X_1\times X_2)$ of the bounded derived category of coherent sheaves on $X_1\times X_2$
define a triangulated functor
\begin{equation}
\Phi_E:   D^b(X_1) \to  D^b(X_2)
\end{equation}
sending a bounded complex $M$ of coherent sheaves on $X_1$ to  
$ Rp_{2 *}(E\stackrel{L}{\otimes} p^*_1 M)$, where $p_i: X_1\times X_2 \to X_i$ are the projections.
Recall that a triangulated functor $D^b(X_1) \to  D^b(X_2)$ is said to be of the  Fourier-Mukai type if it is isomorphic to $\Phi_E$ for some $E$.

Let $Y$  be a smooth projective scheme  over $\Spec \bZ_p$ and let $X$ be its special fiber,   $i: X \mono Y$ the closed  embedding.
 Consider the triangulated  functor $G: D^b(X) \to  D^b(X)$  given by the formula
               $$                     G =  L i^*   \circ  i_*            $$
We shall see that in general $G$ is not   of  the Fourier-Mukai type. 
\begin{Th} Let  $Z$   a smooth projective scheme over $\Spec \bZ_p$, $Y= Z\times Z$, $X= Y\times _{\Spec \bZ_p} \Spec \bF_p$ . Assume that
\begin{enumerate}
\item The Frobenius morphism $Fr: \Zb \to \Zb$, where $\Zb= Z \times \Spec \bF_p$, does not lift modulo $p^2$.
\item $H^1(X, T_X)=0$, where $T_X$ is the tangent sheaf on $X$.
\end{enumerate} 
 Then  $G =  L i^*   \circ  i_*:  D^b(X) \to  D^b(X)$ 
 is not   of the  Fourier-Mukai type. 
\end{Th}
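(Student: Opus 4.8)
\section*{Proof strategy}

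\noindent
\textbf{Reduction via Toën's theorem.}
By Toën's theorem (\cite{t}, Theorem~8.15), quoted in the introduction, it suffices to show that $G$ does not admit an $\bF_p$\nobreakdash-linear DG quasi-functor lifting it; equivalently (since $D^b_{dg}(X)$ is the unique enhancement) that $G$ is not of Fourier--Mukai type. We argue by contradiction, assuming $G\cong\Phi_E$ for some $E\in D^b(X\times X)$, and we shall contradict hypothesis~(1). Throughout we write $i\colon X\hookrightarrow Y$ for the special fibre, so that $X=V(p)$ is a Cartier divisor with $N_{X/Y}\cong\cO_X$, and we freely use the identification $G\simeq (-)\otimes^{L}_{\cO_X}\cB$, $\cB:=\cO_X\otimes^{L}_{\cO_Y}\cO_X$, where $\cB$ is an $\cO_X$\nobreakdash-bimodule which, as a mere complex, is $\cO_X\oplus\cO_X[1]$ (Koszul resolution of $\cO_X=\cO_Y/p$). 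Note the $\bZ_p$\nobreakdash-linear DG incarnation of $\cB$, coming from the smooth $\bZ_p$\nobreakdash-scheme $Y$, is the source of the $\bZ_p$\nobreakdash-linear DG lift of $G$ mentioned in the abstract.

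\smallskip
\noindent
\textbf{Step 1: $G$ is pointwise $\cF\mapsto\cF\oplus\cF[1]$.}
Using the projection formula one gets $i_*G(\cF)\cong i_*\cF\otimes^{L}_{\cO_Y}i_*\cO_X$, and the two-term Koszul complex $[\cO_Y\xrightarrow{\,p\,}\cO_Y]$ together with $p|_{i_*\cF}=0$ gives $i_*G(\cF)\cong i_*(\cF\oplus\cF[1])$; since $i_*$ reflects isomorphisms, $G(\cF)\cong\cF\oplus\cF[1]$ for every $\cF\in D^b(X)$, and similarly $G(\cO_x)\cong\cO_x\oplus\cO_x[1]$ for closed points $x$. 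Consequently the cohomology sheaves of $E$ are supported on the diagonal, with $\cH^0(E)\cong\cO_\Delta$, $\cH^{-1}(E)\cong\cO_\Delta$, so $E$ is an extension
\[
\cO_\Delta[1]\longrightarrow E\longrightarrow \cO_\Delta\stackrel{\beta}{\longrightarrow}\cO_\Delta[2],\qquad \beta\in \operatorname{Ext}^2_{X\times X}(\cO_\Delta,\cO_\Delta)=HH^2(X).
\]

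\smallskip
\noindent
\textbf{Step 2: the kernel splits, so $G\cong\operatorname{id}\oplus\operatorname{id}[1]$ as a triangulated functor.}
Under the HKR decomposition $HH^2(X)=H^0(X,\wedge^2T_X)\oplus H^1(X,T_X)\oplus H^2(X,\cO_X)$ write $\beta=\beta_{2,0}+\beta_{1,1}+\beta_{0,2}$. Evaluating the extension $\cO_\Delta[1]\to E\to\cO_\Delta$ on a skyscraper $\cO_x$ and using $G(\cO_x)\cong\cO_x\oplus\cO_x[1]$ (Step~1) forces the value of $\beta_{2,0}$ at every $x$ to vanish, hence $\beta_{2,0}=0$; evaluating on $\cO_X$ and using $G(\cO_X)\cong\cO_X\oplus\cO_X[1]$ forces $\beta_{0,2}=0$; and $\beta_{1,1}=0$ by hypothesis~(2). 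Therefore $\beta=0$, $E\cong\cO_\Delta\oplus\cO_\Delta[1]$, and $G\cong\Phi_{\cO_\Delta}\oplus\Phi_{\cO_\Delta[1]}=\operatorname{id}_{D^b(X)}\oplus\operatorname{id}_{D^b(X)}[1]$ as triangulated (indeed, as $\bF_p$\nobreakdash-linear DG) functors. Equivalently, the $A_\infty$\nobreakdash-$\cO_X$\nobreakdash-bimodule $\cB$ is equivalent to the honest bimodule $\cO_X\oplus\cO_X[1]$ with its split $\bF_p$\nobreakdash-linear structure.

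\smallskip
\noindent
\textbf{Step 3: this contradicts non-liftability of Frobenius.}
Here the hypothesis $Y=Z\times Z$ enters: $X=\Zb\times\Zb$, so the structure sheaf $\cO_{\Gamma}$ of the graph $\Gamma\subset\Zb\times\Zb$ of $Fr\colon\Zb\to\Zb$ is an object of $D^b(X)$, with $N_{\Gamma/X}\cong Fr^*T_{\Zb}$ (as $dFr=0$). The comodule/descent formalism for the comonad $G=Li^*i_*$ identifies, compatibly with the tower $Y\supset\cdots\supset Y_n\supset\cdots\supset Y_1=X$ (where $Y_n=Y\otimes_{\bZ_p}\bZ/p^n$), the condition ``$\cF$ lifts to $Y_2$'' with the existence of a suitable $G_2$\nobreakdash-comodule structure on $\cF$, where $G_2=Li^*_{X/Y_2}(i_{X/Y_2})_*$ receives a comonad map from $G$. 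The point is that the \emph{canonical $\bZ_p$\nobreakdash-linear} $A_\infty$\nobreakdash-bimodule structure on $\cB$ is genuinely non-split: were it equivalent (as in Step~2) to the split $\bF_p$\nobreakdash-linear bimodule $\cO_X\oplus\cO_X[1]$, the resulting trivialisation would produce a lift of $\cO_{\Gamma}$ to $Y_2$, hence a lift of the closed subscheme $\Gamma$ to $Z_2\times Z_2$ flat over $\bZ/p^2$, hence --- projecting to the first factor, which is an isomorphism of $\bZ/p^2$\nobreakdash-flat schemes reducing to $\operatorname{id}_{\Zb}$ --- a morphism $Z_2\to Z_2$ reducing to $Fr$, i.e.\ a lift of $Fr$ modulo $p^2$. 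This contradicts~(1). (Hypothesis~(2) is precisely what guarantees, via Step~2, that the only obstruction to splitting $\cB$ over $\bF_p$ is this Frobenius-lifting obstruction, the ``Hodge-theoretic'' pieces $H^1(T_X)$, $H^2(\cO_X)$ of $HH^2(X)$ being either zero by assumption or killed by the pointwise computation.)

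\smallskip
\noindent
\textbf{Main obstacle.}
The delicate point is Step~3: making precise the passage from ``$\cB$ is split as an $A_\infty$\nobreakdash-$\cO_X$\nobreakdash-bimodule (equivalently $G$ is Fourier--Mukai)'' to ``$\cO_{\Gamma}$ lifts to $Y_2$''. Concretely one must compare the two DG incarnations of $G$ --- the canonical $\bZ_p$\nobreakdash-linear one through $D^b_{dg}(Y)$ and the $\bF_p$\nobreakdash-linear one coming from $\Phi_E$ with $E=\cO_\Delta\oplus\cO_\Delta[1]$ --- and identify the class measuring their difference, lying in a suitable $\bZ_p$\nobreakdash-linear Hochschild-type group $HH^2(X/\bZ_p)\supsetneq HH^2(X/\bF_p)$, with (a Frobenius-twisted form of) the deformation class of $X_2=Y\bmod p^2$; its image in $\operatorname{Ext}^2_X(\cO_\Gamma,\cO_\Gamma)$ is then the obstruction to lifting $\cO_\Gamma$, which by Deligne--Illusie type considerations is the obstruction to lifting $Fr_{\Zb}$ mod $p^2$. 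Hypothesis~(2) is used to see that this image completely controls the splitting question; hypothesis~(1) then closes the argument.
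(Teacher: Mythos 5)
Your skeleton --- the kernel must be an extension of $\cO_{\Delta_X}$ by $\cO_{\Delta_X}[1]$ classified by $\beta\in \Ext^2_{\cO_{X\times X}}(\cO_{\Delta_X},\cO_{\Delta_X})$, the three HKR components of $\beta$ are killed separately, and the contradiction comes from the graph of Frobenius --- is exactly the paper's. But your Step~1 contains a genuine error that propagates. From $i_*G(\cF)\cong i_*\cF\oplus i_*\cF[1]$ you cannot conclude $G(\cF)\cong\cF\oplus\cF[1]$: the functor $i_*$ reflects isomorphisms of \emph{morphisms} (it is conservative), but you have no morphism $G(\cF)\to\cF\oplus\cF[1]$ inducing the given isomorphism downstairs, and $i_*$ is far from full on the derived category. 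Concretely, $G(M)$ sits in a triangle $M[1]\to G(M)\to M\rar{\beta_M} M[2]$ whose connecting map $\beta_M$ always dies after applying $i_*$ but need not vanish on $X$. In fact Step~1 is provably false under the hypotheses of the theorem: the paper's key Lemma shows that for a coherent sheaf $M$ one has $G(M)\cong M\oplus M[1]$ if and only if $M$ lifts to a coherent sheaf on $Y$ flat over $\bZ/p^2\bZ$, and $\cO_{\Gamma}$, for $\Gamma$ the graph of Frobenius, does not so lift by hypothesis~(1). If your Step~1 held, every coherent sheaf would be split by $G$ and Frobenius would lift.

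That Lemma --- proved by converting a splitting $\lambda\colon G(M)\to M[1]$, via adjunction, into a class $\gamma\in\Ext^1_{\cO_Y}(i_*M,i_*M)$, i.e.\ an extension $\tilde M$ of $i_*M$ by itself, and checking that $\lambda\circ\alpha_M$ is invertible exactly when $\tilde M$ is flat over $\bZ/p^2\bZ$ --- is the ingredient missing from both halves of your argument. It is what legitimately gives $G(\delta_x)\cong\delta_x\oplus\delta_x[1]$ and $G(\cO_X)\cong\cO_X\oplus\cO_X[1]$ in your Step~2, since skyscrapers and $\cO_X$ obviously lift modulo $p^2$; and it collapses your Step~3 to one line: $\beta=0$ forces $\beta_{\cO_\Gamma}=0$, hence $\cO_\Gamma$ lifts, hence (by the cyclicity argument you correctly sketch) $\Gamma$ and then $Fr$ lift modulo $p^2$, contradicting~(1). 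As written, your Step~3 is only a programme: the comparison of the $\bZ_p$-linear and $\bF_p$-linear $A_\infty$-structures on $\cB$ and the identification of the discrepancy class are asserted, not carried out, and you yourself flag this as the main obstacle. The elementary adjunction argument above is how the paper carries it out; I would rebuild your proof around that equivalence rather than around the comonad/Hochschild comparison.
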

For example,  let $GL_n$ be the general linear group over $\Spec \bZ_p$, $B\subset GL_n$ a Borel subgroup.  Then,  by Theorem 6 from \cite{btlm}, for any $n>2$, the flag variety $Z= GL_n/B$
satisfies the first assumption of the Theorem {\it i.e.}, the Frobenius $Fr: \Zb \to \Zb$ does not lift on $Z \times \Spec \bZ/p^2 \bZ$. By (\cite{klt}, Theorem 2), we have that $H^1(\Zb, T_{\Zb})= H^1(\Zb, \cO_{\Zb})=0$.
It follows that $H^1(X, T_X)=0$.  Hence, by the Theorem, for $n>2$, $G: D^b(X) \to  D^b(X)$  is not  of  the Fourier-Mukai type.
  \begin{proof}
Assume the contrary and let $E\in D^b(X\times X)$ be the  Fourier-Mukai kernel. By definition, for every $M\in  D^b(X)$ we have a functorial isomorphism
\begin{equation}\label{pr1}
G(M)\iso Rp_{2 *}(E\stackrel{L}{\otimes} p^*_1 M).
\end{equation}
By the projection formula (\cite{h}, Chapter \rm{II}, Prop. 5.6) we have that
$$i_*  \circ  L i^*   \circ  i_*(M) \iso  i_*(M) \stackrel{L}{\otimes}   i_*(\cO_X) \iso  i_*(M) \otimes   (\cO_Y \rar{p} \cO_Y)\iso  i_*(M) \oplus  i_*(M)[1]$$
In particular, if $M $ is a coherent sheaf then $\underline{H}^i(G(M)) \simeq M$ for $i=0, -1$ and  $\underline{H}^i(G(M))=0$ otherwise. 
Applying this observation and formula (\ref{pr1}) to skyscraper sheaves, $M= \delta_x$, $x\in X(\overline \bF_p)$, we conclude that  the coherent sheaves $\underline{H}^i(E)$ are set theoretically supported on the diagonal $\Delta_X \subset X\times X$. Applying the same formulas to  $M= \cO_X$ we see that
$ p_{2 *}(\underline{H}^i(E))=\cO_X$  for $i=0, -1$ and $ p_{2 *}(\underline{H}^i(E))=0$ otherwise. In fact,  every coherent sheaf $F$ on $X\times X$ which is set theoretically 
supported on the diagonal and such that $p_{2 *} F = \cO_X$ is isomorphic to $\cO_{\Delta_X}$. It follows that $\underline{H}^0(E)= \underline{H}^{-1}(E)= \cO_{\Delta_X}$. 
 In the other words, $E$ fits into an exact triangle in  $ D^b(X\times X)$
\begin{equation}\label{pr5}
 \cO_{\Delta_X}[1] \rar{\alpha}  E \rar{}  \cO_{\Delta_X} \rar{\beta}  \cO_{\Delta_X}[2]
 \end{equation}
for some $\beta \in Ext^2_{\cO_{X\times X}}( \cO_{\Delta_X}, \cO_{\Delta_X}).$
We wish to show that the second assumption in the Theorem implies that $\beta = 0$, while the first one implies that $\beta \ne 0$.
For every  $M \in  D^b(X)$, (\ref{pr5}) gives rise to an exact triangle  
\begin{equation}\label{pr3}
M[1] \rar{\alpha_M} G(M)\rar{} M \rar{\beta_M} M[2]
\end{equation}
 Our main tool is the following result.
\begin{lm} For a coherent sheaf $M$ the following conditions  are equivalent.
\begin{enumerate}
\item $\beta_M=0$.
\item $G(M)\iso M\oplus M[1]$.
\item There exists a morphism $\lambda: G(M) \to M[1]$ such that 
 $\lambda \circ \alpha_M$ is an isomorphism.
\item $M$ admits a lift modulo $p^2$ {\it i.e.}, there is a coherent sheaf $\tilde M$ on $Y$ flat over $\bZ/p^2 \bZ$ such that $i^*\tilde M \simeq M$.
\end{enumerate}
\end{lm}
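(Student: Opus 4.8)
The plan is to prove the cyclic chain $(1)\Rightarrow(2)\Rightarrow(3)\Rightarrow(1)$ by formal manipulations in $D^b(X)$, and then to establish $(3)\Leftrightarrow(4)$ by reinterpreting $(3)$, via the adjunction $Li^*\dashv i_*$, as the existence of an extension of coherent $\cO_Y$-modules which constitutes a lift of $M$ modulo $p^2$.

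For the first chain the basic point is that (\ref{pr3}) is the canonical truncation triangle of $G(M)$. Indeed, the projection-formula computation already made in the proof shows that $\underline{H}^i(G(M))\simeq M$ for $i=0,-1$ and vanishes otherwise; feeding this, together with $\underline{H}^i(M[1])=0$ for $i\neq-1$ and $\underline{H}^i(M)=0$ for $i\neq0$, into the long exact sequence of cohomology sheaves of (\ref{pr3}), one finds that $\alpha_M$ induces an isomorphism on $\underline{H}^{-1}$ and that $g_M\colon G(M)\to M$ induces an isomorphism on $\underline{H}^0$. Granting this, $(1)\Rightarrow(2)$ is just the splitting of (\ref{pr3}); $(3)\Rightarrow(1)$ holds because a $\lambda$ with $\lambda\circ\alpha_M$ invertible exhibits $\alpha_M$ as a split monomorphism, and a distinguished triangle whose first map is a split monomorphism is split, so $\beta_M=0$; and $(2)\Rightarrow(3)$ follows because under any isomorphism $G(M)\cong M\oplus M[1]$ the map $\alpha_M$ lies in $\Hom(M[1],M\oplus M[1])=\End(M)$ (the $\Ext^{-1}$-summand vanishes) and its component along $M[1]$ equals $\underline{H}^{-1}(\alpha_M)$, hence is an automorphism of $M$; so $\alpha_M$ is again split, and the corresponding retraction, composed with the isomorphism, gives the desired $\lambda\colon G(M)\to M[1]$.

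For $(3)\Leftrightarrow(4)$ I would use the adjunction identification
$$\Hom_{D^b(X)}\!\bigl(G(M),M[1]\bigr)=\Hom_{D^b(X)}\!\bigl(Li^*i_*M,M[1]\bigr)\;\cong\;\Hom_{D^b(Y)}\!\bigl(i_*M,i_*M[1]\bigr)=\Ext^1_{\cO_Y}(i_*M,i_*M),$$
under which a morphism $\lambda$ corresponds to the class of an extension $0\to i_*M\xrar{u}F\xrar{v}i_*M\to0$ of coherent $\cO_Y$-modules, and, unwinding the adjunction (and using that the counit of $Li^*\dashv i_*$ at $M$ coincides with $g_M$ up to an automorphism of $M$), one gets $\lambda=g_M[1]\circ\partial_F$ up to an automorphism of $M[1]$, where $\partial_F\colon G(M)\to G(M)[1]$ is the connecting morphism of the distinguished triangle $G(M)\xrar{Li^*u}Li^*F\xrar{Li^*v}G(M)\xrar{\partial_F}G(M)[1]$ obtained by applying $Li^*=\cO_X\stackrel{L}{\otimes}_{\cO_Y}(-)$ to the extension. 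Since $\underline{H}^0(g_M)$ and $\underline{H}^{-1}(\alpha_M)$ are isomorphisms, and an endomorphism of $M[1]$ is invertible exactly when it is so on $\underline{H}^{-1}$, the assertion that $\lambda\circ\alpha_M$ is an isomorphism is equivalent to the assertion that $\underline{H}^{-1}(\partial_F)\colon\underline{H}^{-1}(G(M))\to\underline{H}^0(G(M))$ is an isomorphism. Writing out the long exact cohomology sequence of the triangle above and identifying $\underline{H}^0(Li^*u)$ with the map $M=i^*i_*M\to i^*F=F/pF$ induced by $u$, and $\underline{H}^{-1}(Li^*v)$ with the restriction $v|_{F[p]}\colon F[p]\to i_*M$ (functoriality of $\Tor$), the latter holds if and only if $u(i_*M)\subseteq pF$ and $F[p]\subseteq\ker v=u(i_*M)$.

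To conclude, note that $i_*M$, hence $u(i_*M)$, is killed by $p$, so one always has $pF\subseteq u(i_*M)\subseteq F[p]$; the two inclusions above therefore reduce to the single equality $F[p]=pF$. As $F$ is necessarily killed by $p^2$, this equality says precisely that $F$ is flat over $\bZ/p^2\bZ$, and then $F/pF=F/u(i_*M)\cong i_*M\cong M$, so $F$ is a lift of $M$ modulo $p^2$; conversely, any lift $\tilde M$ of $M$ gives such an extension, namely the extension $0\to i_*M\to\tilde M\to i_*M\to0$ furnished by the $\bZ/p^2\bZ$-flatness of $\tilde M$. Hence a $\lambda$ as in $(3)$ exists if and only if a coherent $\cO_Y$-module flat over $\bZ/p^2\bZ$ with pullback $M$ to $X$ exists, which is $(4)$. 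The step I expect to require the most care is precisely the bookkeeping in the third paragraph — the identification of $\alpha_M$ with the truncation map, of $\lambda$ with $g_M[1]\circ\partial_F$ through the adjunction, and of $\underline{H}^0(Li^*u)$, $\underline{H}^{-1}(Li^*v)$ with the naive maps — which, though entirely routine, must be arranged so that the reduction of ``$\lambda\circ\alpha_M$ invertible'' to ``$F[p]=pF$'' is unambiguous.
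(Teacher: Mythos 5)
Your proposal is correct and follows essentially the same route as the paper: the equivalence of (1)--(3) is the same formal splitting argument (which the paper leaves as ``immediate''), and for $(3)\Leftrightarrow(4)$ you pass through the adjunction $Li^*\dashv i_*$ to an extension $F$ of $i_*M$ by itself (the paper's $\tilde M=\cone\gamma[1]$), apply $Li^*$, and read off from the long exact sequence of cohomology sheaves that $\lambda\circ\alpha_M$ is invertible precisely when $pF=F[p]$, i.e.\ when $F$ is flat over $\bZ/p^2\bZ$. The extra bookkeeping you supply (identifying $\alpha_M$, $g_M$ with the truncation maps and $\lambda\circ\alpha_M$ with the connecting morphism) is exactly what the paper's terser argument implicitly relies on.
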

\begin{proof}
The equivalence of (1), (2) and (3) is immediate. Let us check that (3) is equivalent to (4). A morphism $\lambda: G(M) \to M[1]$ gives rise by adjunction a morphism $\gamma: i_*M \to i_* M[1]$. Note that $\tilde M = \cone \gamma [1]$ is a coherent sheaf on $Y$ which is an extension of $i_*M$ by itself.
It suffices to prove that $\lambda \circ \alpha_M: M[1] \to M[1]$ is an isomorphism if and only if $\tilde M$ is flat over $\bZ/p^2 \bZ$. 
Indeed, from the exact triangle 
$$ Li^*i_* M \to Li^* (\tilde M)  \to  Li^*i_*M \to  Li^*i_* M[1]$$
we get a long exact sequence of the cohomology sheaves
$$     0\to  M\to L_1i^* (\tilde M) \to M\rar{\lambda \circ \alpha_M}  M\to  i^* (\tilde M) \to M \to 0 $$
Thus $\lambda \circ \alpha_M$ is an isomorphism if and only if in the exact sequence 
$$0\to i_*M \to \tilde M \to i_*M \to 0$$
the image of second map is the kernel of the multiplication by $p$ on $\tilde M$ and also the image of this map. The latter is equivalent to flatness of $\tilde M$  over $\bZ/p^2 \bZ$. 
\end{proof}
We have a spectral sequence converging to $Ext^*_{\cO_{X\times X}}( \cO_{\Delta_X}, \cO_{\Delta_X}) $ whose second page is 
$ H^*(X,  \cE xt^* _{\cO_{X\times X}}( \cO_{\Delta_X}, \cO_{\Delta_X}))$. In particular, we have a homomorphism
$$Ext^2_{\cO_{X\times X}}( \cO_{\Delta_X}, \cO_{\Delta_X}) \to H^0(X,  \cE xt^2 _{\cO_{X\times X}}( \cO_{\Delta_X}, \cO_{\Delta_X})) \iso H^0(X, \wedge^2 T_X).$$
Let us check that the image $\mu$ of $\beta$ under this map
is $0$. To do this we apply the Lemma to skyscraper sheaves $\delta_x$, where $x$ runs over closed points of $X$. On the one hand, the evaluation of the bivector field $\mu$ at $x$
is equal to the class of $\beta_{\delta_x}$ in $Ext^2_{\cO_{X}}( \delta_x, \delta_x) \iso \wedge^2 T_{x, X}.$ 
On the other hand, by the Lemma, $\beta_{\delta_x}=0$ since $\delta_x$ is liftable modulo $p^2$.
  Next, the assumption that $H^1(X, T_X)=0$ implies that $\beta$   lies in the image of the map
  \begin{equation}\label{pr4}
v:  H^2(X, \cO_X) \iso H^2 (X,  \cE xt^0 _{\cO_{X\times X}}( \cO_{\Delta_X}, \cO_{\Delta_X}) ) \to Ext^2_{\cO_{X\times X}}( \cO_{\Delta_X}, \cO_{\Delta_X}) .
  \end{equation}
  The map (\ref{pr4}) has a left inverse $u: Ext^2_{\cO_{X\times X}}( \cO_{\Delta_X}, \cO_{\Delta_X})  \to  H^2(X, \cO_X)$ which takes $\beta$ to $\beta_{\cO_X}$. But, by the Lemma, the later class is equal to $0$ since $\cO_X$ is liftable modulo $p^2$. It follows that $\beta$ is $0$.
  
On the other hand, let  $\Gamma \subset X= \Zb \times \Zb$ be the graph of the Frobenius morphism $Fr: \Zb \to \Zb$ and $\cO_{\Gamma}$ the structure sheaf of $\Gamma$ viewed as a coherent sheaf on $X$. Then, by our first assumption, the sheaf  $\cO_{\Gamma}$ is not liftable modulo $p^2$. Hence, by the Lemma, $\beta_{\cO_{\Gamma}}$ is not $0$. This contradiction 
  completes the proof. 
\end{proof}

{\bf Acknowledgements.} I would like to thank  Alberto Canonaco and Paolo Stellari: their interest prompted writing this note. Also, I am grateful to Alexander Samokhin for stimulating discussions and references.

\end{document}